\newcommand{\Exp}{\mathbf{E}}
\newcommand{\Prob}{\mathbf{P}}
\def\<#1,#2>{\langle #1,#2\rangle}
\newcommand{\ivar}{ y} 
\newcommand{\ovar}{ x} 
\newcommand{\iidx}{ t} 
\newcommand{\oidx}{ k} 
\newcommand{\rsample}{ i} 
\newcommand{\csample}{ j} 
\newcommand{\weightinG}{q} 
\newcommand{\weightofnorm}{v}
\newcommand{\hL}{ \hat{L}}
\newcommand{\stepsize}{h}
\newcommand{\eqdef}{:=}
\newtheorem{theorem}{Theorem}
\newtheorem{lemma}[theorem]{Lemma}
\newtheorem{corollary}[theorem]{Corollary}
\title{Semi-Stochastic Coordinate Descent\thanks{JK acknowledges support from Google through the Google European Doctoral Fellowship in Optimization Algorithms. ZQ and PR would like to  acknowledge support from the EPSRC  Grant EP/K02325X/1,
{\em Accelerated Coordinate Descent Methods for Big Data Optimization}. }}
\date{October 16, 2014 \\(full\thanks{A short version of this paper (5 pages; including the main result but without proof) was posted  on arXiv on October 16, 2014 \cite{s2cd-nips}. The paper was accepted  for presentation at the 2014 NIPS Optimization for Machine Learning workshop in a peer reviewed process. The accepted papers are listed on the website of the workshop, but are not published in any proceedings volume.} version: December 18, 2014)}
\author{
Jakub Kone\v{c}n\'{y} \qquad 
Zheng Qu \qquad
Peter Richt\'{a}rik  \\\\
{\em School of Mathematics}\\
{\em University of Edinburgh}\\
{\em United Kingdom}
}
\begin{document}

\maketitle

\begin{abstract}
We propose a novel  stochastic gradient method---semi-stochastic coordinate descent (S2CD)---for the problem of minimizing a strongly convex function  represented as the average of a large number of smooth convex functions:  $f(x)=\tfrac{1}{n}\sum_i f_i(x)$. Our method first performs a deterministic step (computation of the gradient of $f$  at the starting point), followed by a large number of stochastic steps. The process is repeated a few times, with the last stochastic iterate becoming the new starting point where the deterministic step is taken. The novelty of our method is in how the stochastic steps are performed. In each such step, we pick a random function $f_i$ and a random coordinate $j$---both using nonuniform distributions---and update a single coordinate of the decision vector only, based on the computation of  the $j^{th}$ partial derivative of $f_i$ at two different points. Each random step of the method constitutes an unbiased estimate of the gradient of $f$ and moreover, the squared norm of the steps goes to zero in expectation, meaning that  the stochastic estimate of the gradient progressively improves.   The complexity of the method is the sum of two terms: $O(n\log(1/\epsilon))$ evaluations of gradients  $\nabla f_i$ and $O(\hat{\kappa}\log(1/\epsilon))$ evaluations of partial derivatives $\nabla_j f_i$, where $\hat{\kappa}$ is a novel condition number. 
\end{abstract}

\newpage
\section{Introduction}

In this paper we study the problem of  unconstrained minimization of a strongly convex function represented as the average of a large number of smooth convex functions:
\begin{equation} 
\label{eq:main} 
\min_{x \in \R^d} f(x) \equiv \frac{1}{n} \sum_{i=1}^n f_i(x).
\end{equation}

Many computational problems in various disciplines are of this form. In machine learning,  $f_i(x)$ represents the loss/risk of classifier $x\in \R^d$ on data sample $i$, $f$ represents the empirical risk (=average loss), and the goal is to find a predictor minimizing  $f$. Often, an L2-regularizer of the form $\mu \|x\|^2$, for $\mu>0$, is added to the loss, making it strongly convex and hence easier to minimize.


\paragraph{Assumptions.}  We assume that the functions  $f_i : \R^d \rightarrow \R$ are  differentiable and convex function, with Lipschitz continuous partial derivatives. Formally, we assume that for each  $i \in [n]\eqdef \{1,2,\dots,n\}$ and  $j \in [d]\eqdef \{1,2,\dots,d\}$ there exists $L_{ij}\geq 0$ such that for all $x\in \R^d$ and $h\in \R$, 
\begin{equation}\label{eq:sjs7shd}
f_\rsample(x + he_\csample) \leq f_\rsample(x) + \left\langle \nabla f_\rsample(x), h e_\csample \right\rangle + \frac{L_{\rsample \csample}}{2} h ^2,
\end{equation}
where $e_\csample$ is the $j^{th}$ standard basis vector in $\R^d$, $\nabla f(x)\in\R^d$ the gradient of $f$ at point $x$ and $\<\cdot, \cdot >$ is the standard inner product.  This assumption was recently used in the analysis the   accelerated coordinate descent method APPROX \cite{approx}. We further assume that  $f$ is $\mu$-strongly convex. That is, we assume that there exists $\mu>0$ such that for all $x,y\in \R^d$,
\begin{equation}
\label{SVRGstrcvx}
f(y) \geq f(x) + \langle \nabla f(x), y - x \rangle + \frac{\mu}{2} \| y - x \|^2.
\end{equation}

\paragraph{Context.}  Batch methods such as gradient descent (GD) enjoy a fast  (linear) convergence rate: to achieve $\epsilon$-accuracy, GD  needs $\mathcal{O}(\kappa\log(1 / \epsilon))$ iterations, where $\kappa$ is a condition number. The drawback of GD is  that in each iteration one needs to compute the gradient of $f$, which requires a pass through the entire dataset. This is prohibitive to do many times if $n$ is very large.

Stochastic gradient descent (SGD)  in each iteration computes the gradient of  a single randomly chosen function $f_i$ only---this constitutes  an unbiased (but noisy) estimate of the gradient of $f$---and makes a step in that direction \cite{robbinsmonro,sgdnemirovski, sgdzhang}. The rate of convergence of SGD is slower, $\mathcal{O}(1 / \epsilon)$,  but the cost of each iteration is independent of $n$. Variants with nonuniform selection probabilities were considered in \cite{zhaozhang-importance_sampling}, a mini-batch variant (for SVMs with hinge loss) was analyzed  in \cite{takac-minibatch}.

Recently, there has been progress in designing algorithms that achieve the fast $O(\log(1/\epsilon))$ rate without the need to scan the entire dataset in each iteration. The first class of methods to have achieved this are stochastic/randomized coordinate descent methods.

When applied to \eqref{eq:main}, coordinate descent methods (CD) \cite{nesterov, rtsimple}  can, like SGD,  be seen as an attempt to keep the benefits of GD (fast linear convergence) while reducing the complexity of each iteration. A CD method only computes a single partial derivative  $\nabla_j f(x)$ at each iteration  and updates a single coordinate of vector $x$ only. When  chosen uniformly at random, partial derivative is also an unbiased estimate of the gradient. However, unlike the SGD estimate, its variance is low. Indeed, as one approaches the optimum, partial derivatives decrease to zero. While CD methods are able to obtain linear convergence, they typically need $O((d/\mu)\log(1/\epsilon))$ iterations when applied to \eqref{eq:main} directly  \footnote{The complexity can be improved to $O(\tfrac{d\beta}{\tau \mu}\log(1/\epsilon))$ in the case when $\tau$ coordinates are updated in each iteration, where $\beta \in [1,\tau]$ is a problem-dependent constant \cite{pcdm}. This has been further studied  for nonsmooth problems via smoothing \cite{spcdm}, for arbitrary nonuniform distributions governing the selection of coordinates \cite{nsync,quartz} and in the distributed setting \cite{hydra, hydra2,quartz}. Also, efficient accelerated variants with $O(1/\sqrt{\epsilon})$ rate were developed \cite{approx, hydra2}, capable of solving problems with 50 billion variables.}. CD method typically significantly outperform GD, especially  on sparse problems with a very large number variables/coordinates \cite{nesterov, rtsimple}. 

An alternative to applying CD to \eqref{eq:main} is to apply it to the dual problem. This is possible under certain additional structural assumptions on the functions $f_i$. This is the strategy employed by stochastic dual coordinate ascent (SDCA) \cite{sdca,quartz}, whose rate is \[O((n+\kappa)\log(1/\epsilon)).\] The condition number $\kappa$ here is different (and larger) than the condition number appearing in the rate of GD. Despite this, this is a vast improvement on the rates achieved by both GD and SGD, and the method indeed  typically performs much better in practice. Accelerated \cite{sdca-accel} and mini-batch \cite{takac-minibatch} variants of SDCA have also been  proposed. We refer the reader to QUARTZ \cite{quartz} for a general analysis involving the update of a random subset of dual coordinates, following an arbitrary distribution.

Recently, there has been progress in designing primal methods which match the fast rate of SDCA.  Stochastic average gradient (SAG)  \cite{sag}, and more recently SAGA \cite{saga}, move in a direction composed of old stochastic gradients. The semi-stochastic gradient descent (S2GD) \cite{s2gd, mS2GD} and stochastic variance reduced gradient (SVRG) \cite{svrg, proxsvrg} methods employ a different strategy: one first computes the gradient of $f$, followed by $O(\kappa)$ steps where only stochastic gradients are computed. These are used to estimate the change of the gradient, and it is this direction which combines the old gradient and the new stochastic gradient information which used in the update.

\paragraph{Main result.} In this work we develop a new  method---semi-stochastic coordinate descent (S2CD)---for solving \eqref{eq:main}, enjoying a fast rate similar to methods such as SDCA, SAG, S2GD, SVRG, MISO, SAGA, mS2GD and QUARTZ. S2CD can be seen as a hybrid between S2GD and CD. In particular, the complexity of our method  is the sum of two terms: \[O(n\log (1/\epsilon))\] evaluations $\nabla f_i$ (that is, $\log(1/\epsilon)$ evaluations of the gradient of $f$) and \[O(\hat{\kappa} \log(1/\epsilon))\] evaluations of $\nabla_j f_i$ for randomly chosen functions $f_i$ and randomly chosen coordinates $j$, where $\hat{\kappa}$ is a new condition number which is larger than  the condition number $\kappa$ appearing in the aforementioned methods. However, note that $\hat{\kappa}$ enters the complexity only in the term involving the cost of the evaluation of a partial derivative $\nabla_j f_i$, which can be substantially smaller than the evaluation cost of  $\nabla f_i$. Hence, our complexity result can be both  better or worse than previous results, depending on whether the increase of the condition number can or can not be compensated by the lower cost of the stochastic steps based on the evaluation of partial derivatives.

\paragraph{Outline.} The paper is organized as follows. In Section~\ref{sec:algo} we describe the S2CD algorithm and in Section~\ref{sec:complexity} we state a key lemma and our main complexity result. The proof of the lemma is provided in Section~\ref{subsec:key_lemma} and the proof of the main result in Section~\ref{sec:proofs}.


\section{S2CD Algorithm}\label{sec:algo}

In this section we describe the Semi-Stochastic Coordinate Descent method (Algorithm~\ref{alg:S2CD}).

\begin{algorithm}[h]
\begin{algorithmic}
\State \textbf{parameters:}  $m$ (max \# of stochastic steps per epoch);  $\stepsize>0$ (stepsize parameter);  $x_0\in \R^d$ (starting point)
\For {$\oidx = 0, 1, 2, \dots$}
	\State  Compute and store $ \nabla f(\ovar_{\oidx}) = \tfrac{1}{n}\sum_i \nabla f_i(\ovar_{\oidx})$
	\State Initialize the inner loop: $\ivar_{\oidx, 0} \gets \ovar_{\oidx}$
	\State Let $\iidx_{\oidx} = T \in \{1,2,\dots,m\}$ with probability $\left(1 - \mu \stepsize\right)^{m-T} / \beta$ 
	\For {$\iidx = 0$ to $\iidx_{\oidx}-1$}
                \State Pick coordinate $\csample \in \{ 1, 2, \dots, d \}$ with probability $p_\csample$
		\State Pick function index $\rsample$ from the set $\{i\;:\;L_{ij}>0\}$ with probability  $q_{\rsample\csample}$
		\State Update the $j^{th}$ coordinate: $ \ivar_{\oidx, \iidx+1} \gets \ivar_{\oidx,\iidx} - \stepsize p_{\csample}^ {-1}  \big( \nabla_{\csample} f(\ovar_{\oidx}) + \frac{1}{n \weightinG_{\rsample \csample}} \left( \nabla_{\csample} f_{\rsample}(\ivar_{\oidx, \iidx}) - \nabla_{\csample} f_{\rsample}(\ovar_{\oidx}) \right) \big) e_{\csample}$
	\EndFor
	\State Reset the starting point: $\ovar_{\oidx+1} \gets \ivar_{\oidx, \iidx_{\oidx}}$
\EndFor
\end{algorithmic}

\caption{Semi-Stochastic Coordinate Descent (S2CD)}
\label{alg:S2CD}
\end{algorithm}

The method has an outer loop (an ``epoch''), indexed by  counter $\oidx$, and an inner loop, indexed by $\iidx$. At the beginning of epoch $k$, we compute and store the gradient of  $f$ at $\ovar_\oidx$.  Subsequently, S2CD enters the inner loop in which 
a sequence of vectors $y_{k,t}$ for $t=0,1\dots,t_k$ 
is computed in a stochastic way, starting from $y_{k,0}=\ovar_{\oidx}$. The number $t_{\oidx}$ of stochastic steps in the inner loop is  random, following a geometric law: 
\[\Prob(t_k=T)= \frac{(1-
\mu \stepsize )^{m-T}}{\beta}, 
\qquad T\in\{1,\dots,m\},\]
where
\[\beta \eqdef \sum_{t = 1}^m (1 - \mu \stepsize )^{m-t}.\] 
In each step of the inner loop, we seek to compute $y_{k,t+1}$, given $y_{k,t}$. In order to do so, we sample coordinate $j$ with probability $p_j$ and subsequently\footnote{In S2CD, as presented, coordinates $\csample$ is selected first, and then function $\rsample$ is selected, according to a distribution conditioned on the choice of $\csample$. However,  one could equivalently sample $(\rsample, \csample)$ with joint probability $p_{\rsample \csample}$. We opted for the sequential sampling for clarity of presentation purposes.} sample $i$ with probability $q_{ij}$, where the probabilities are given by
\begin{equation}\label{eq:sjs7tbjd}
\omega_\rsample \eqdef | \{ j : L_{\rsample \csample} \neq 0 \} |,\quad \weightofnorm_\csample \eqdef \sum_{\rsample = 1}^ n \omega_\rsample L_{\rsample \csample}, 
\quad
p_{\csample} \eqdef \weightofnorm_\csample / \sum_{\csample = 1}^d \weightofnorm_\csample, \quad  q_{\rsample \csample} \eqdef \frac{\omega_\rsample L_{\rsample \csample}}{\weightofnorm_j}, \quad  p_{\rsample \csample} \eqdef p_\csample q_{\rsample \csample}.
\end{equation}
Note that $L_{\rsample \csample}=0$ means that function $f_\rsample$ does not depend on the $\csample^{th}$ coordinate of $x$. Hence, $\omega_\rsample$ is the number of coordinates  function $f_\rsample$ depends on -- a measure of sparsity of the data\footnote{The quantity $\omega\eqdef \max_i \omega_i$ (degree of partial separability of $f$) was used in the analysis of a large class of randomized parallel  coordinate descent methods in \cite{pcdm}. The more informative quantities $\{\omega_i\}$ appear in the analysis of parallel/distributed/mini-batch coordinate descent methods \cite{hydra, approx, hydra2}.}. It can be shown that $f$ has a $1$-Lipschitz gradient with respect to the weighted Euclidean norm with weights $\{v_j\}$ (\cite[Theorem 1]{approx}). Hence, we sample coordinate $j$ proportionally to this weight $v_j$. Note that $p_{\rsample \csample}$ is the joint probability of choosing the pair $(\rsample, \csample)$.

Having sampled coordinate $j$ and function index $i$, we compute two partial derivatives: $\nabla_\csample f_\rsample(\ovar_\oidx)$ and $\nabla_\csample f_\rsample(\ivar_{\oidx, \iidx})$ (we compressed the notation here by writing $\nabla_j f_i(x)$ instead of $\< \nabla f_i(x),e_j >$), and combine these with the pre-computed value $\nabla_j f(\ovar_{\oidx})$ to form an update of the form 
\begin{equation}\label{eq:87gsb8s9} y_{k,t+1} \leftarrow y_{k,t} - \stepsize p_j^{-1} G^{ij}_{kt} e_j = y_{k,t} - h g_{kt}^{ij},\end{equation}
where \begin{equation}\label{eq:g_kl}
g_{kt}^{ij}\eqdef p_j^{-1} G_{kt}^{ij}e_j
\end{equation} 
and
\begin{equation}\label{eq:0j9j0s9s}G^{ij}_{kt} \eqdef  \nabla_{\csample} f(\ovar_{\oidx}) + \frac{1}{n \weightinG_{\rsample \csample}} \left( \nabla_{\csample} f_{\rsample}(\ivar_{\oidx, \iidx}) - \nabla_{\csample} f_{\rsample}(\ovar_{\oidx}) \right).\end{equation}  
Note that only a single coordinate of $y_{k,t}$ is updated at each iteration. 

In the entire text (with the exception of the statement of Theorem~\ref{thm:S2CD} and a part of Section~\ref{eq:s98h9s8h}, where $\Exp$ denotes the total expectation) we will assume that all expectations are conditional on the entire history of the random variables generated up to the point when $y_{k,t}$ was computed. With this convention, it is possible to think that there are only two random variables: $j$ and $i$.  By $\Exp$ we then mean the expectation with respect to both of these random variables, and by $\Exp_i$ we mean expectation with respect to $i$ (that is, conditional on $j$). With this convention,  we can write  
\begin{eqnarray}\label{EGij}\Exp_i \left[G^{ij}_{kt} \right] 
& = & \sum_{i=1}^n q_{ij} G^{ij}_{kt} \notag\\
& \overset{\eqref{eq:0j9j0s9s}}{=} & \nabla_{\csample} f(\ovar_{\oidx}) + \frac{1}{n} \sum_{i=1}^n \left( \nabla_{\csample} f_{\rsample}(\ivar_{\oidx, \iidx}) - \nabla_{\csample} f_{\rsample}(\ovar_{\oidx}) \right) \;\;\overset{\eqref{eq:main} }{=}\;\; \nabla_j f(y_{k,t}),\label{eq:s98j6dd}
\end{eqnarray}
which means that conditioned on $j$, $G^{ij}_{kt}$ is an unbiased estimate of the  $j^{th}$ partial derivative of $f$ at $y_{k,t}$. An equally easy calculation reveals that  the random vector
$g_{kl}^{ij}$ is an unbiased estimate of the  gradient of $f$ at $y_{k,t}$:
\begin{eqnarray*}
\Exp \left[ g^{ij}_{kl} \right] &\overset{\eqref{eq:g_kl}}{=}& \Exp \left[p_j^{-1} G_{kt}^{ij}e_j \right] = \Exp \left[\Exp_i \left[ p_j^{-1} G^{ij}_{kt} e_j \right] \right] \\
&=& \Exp \left[p_{j}^{-1} e_j \Exp_i \left[G^{ij}_{kt}\right] \right]
\;\; \overset{\eqref{eq:s98j6dd}}{=} \;\;   \Exp \left [p_{j}^{-1} e_j \nabla_j f(y_{k,t}) \right] \;\;  = \;\; \nabla f(y_{k,t}).\end{eqnarray*}

Hence, the update step performed by S2CD is a stochastic gradient step of fixed stepsize $h$.  

Before we describe our main complexity result in the next section, let us briefly comment on a few special cases of S2CD:

\begin{itemize}
\item If  $n = 1$ (this can be always achieved simply by grouping all functions in the average into a single function), S2CD  reduces to a stochastic CD algorithm with importance sampling\footnote{A   parallel CD method  in which every subset of coordinates can be assigned a different probability of being chosen/updated was analyzed in \cite{nsync}.}, as studied in \cite{nesterov,rtsimple, quartz}, but written with many redundant computations. Indeed, the method in this case  does not require the $x_k$ iterates, nor does it need to compute the gradient of $f$, and instead takes on the form: \[y_{0,t+1}\leftarrow y_{0,t} - \stepsize p_j^{-1} \nabla_j f(y_{0,t})e_j,\] where $p_j=L_{1j}/\sum_j {L_{1j}}$.

\item It is possible to extend the S2CD algorithm and results to the case when coordinates are replaced by (nonoverlapping) blocks of coordinates, as in \cite{rtsimple} --- we did not do it here for the sake of keeping the notation simple. In such a setting, we would obtain semi-stochastic {\em block} coordinate descent. In the special case with {\em all variables forming a single block}, the algorithm reduces to the S2GD method described in \cite{s2gd}, but with nonuniform probabilities for the choice of  $i$ --- proportional to the Lipschitz constants of the gradient of the functions $f_i$ (this is also studied in \cite{proxsvrg}). As in \cite{proxsvrg}, the complexity result then depends on the average of the Lipschitz constants.
\end{itemize}

Note that the algorithm, as presented, assumes the knowledge of $\mu$. We have done this for simplicity of exposition: the method works also if $\mu$ is replaced by some lower bound in the method, which can be set to 0 (see \cite{s2gd}). The change to the complexity results will be only minor and all our conclusions hold. Likewise, it is possible to give an $O(1/\epsilon)$ complexity result in the non-strongly convex case $f$ using standard regularization arguments (e.g., see \cite{s2gd}).

%
%

\section{Complexity Result}\label{sec:complexity}

In this section, we state and describe our complexity result; the proof is provided in Section~\ref{sec:proofs}. 

An important step in our analysis is  proving a  good upper bound on the variance of the (unbiased) estimator $g_{kt}^{ij} = p_j^{-1}G_{kt}^{ij}e_j$ of $\nabla f(y_{k,t})$, one that we can ``believe'' would  diminish to zero as the algorithm progresses.  This is important for several reasons. First, as the method approaches the optimum, we wish  $g_{kt}^{ij}$ to be progressively closer to the true gradient, which in turn will be close to zero. Indeed, if this was the case, then   S2CD behaves like gradient descent with fixed stepsize $h$ close to optimum. In particular, this would indicate that using fixed stepsizes makes sense. 



In light of the above discussion, the following lemma plays a key role in our analysis:

\begin{lemma}\label{lem:main} The iterates of the S2CD algorithm satisfy 
\begin{equation}\label{eq:iuhs98s} \Exp \left[ \left\| g_{kt}^{ij}\right\|^2 \right] \leq 4 \hL \left( f(\ivar_{\oidx, \iidx}) - f(\ovar_*)\right) + 4 \hL \left( f(\ovar_{\oidx}) - f(\ovar_*) \right), \end{equation}
where
\begin{equation}\label{eq:us886vs5} \hL := \frac{1}{n}\sum_{\csample = 1}^d \weightofnorm_\csample \overset{\eqref{eq:sjs7tbjd}}{=} \frac{1}{n} \sum_{j = 1}^d \sum_{i = 1}^n \omega_i L_{ij}.\end{equation} 
\end{lemma}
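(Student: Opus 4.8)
The plan is to reduce the bound to a purely scalar, per-coordinate estimate, and then to a coordinate-wise co-coercivity inequality. Since $g_{kt}^{ij}=p_j^{-1}G_{kt}^{ij}e_j$ is supported on a single coordinate, $\|g_{kt}^{ij}\|^2=p_j^{-2}(G_{kt}^{ij})^2$; taking expectation first over $i$ and then over $j$ gives $\Exp[\|g_{kt}^{ij}\|^2]=\sum_{j}p_j^{-1}\Exp_i[(G_{kt}^{ij})^2]$. To expose the variance-reduction mechanism I would insert $\pm\tfrac{1}{nq_{ij}}\nabla_j f_i(x_*)$ into $G_{kt}^{ij}$ (using that $x_*$ is the minimizer, so $\nabla f(x_*)=0$), writing $G_{kt}^{ij}$ as the ``$y$-part'' $\tfrac{1}{nq_{ij}}(\nabla_j f_i(y_{k,t})-\nabla_j f_i(x_*))$ plus the term $\nabla_j f(x_k)-\tfrac{1}{nq_{ij}}(\nabla_j f_i(x_k)-\nabla_j f_i(x_*))$, which has conditional mean zero over $i$. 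Applying $(a+b)^2\le 2a^2+2b^2$ and bounding the variance of the mean-zero term by the second moment of its non-constant piece $\tfrac{1}{nq_{ij}}(\nabla_j f_i(x_k)-\nabla_j f_i(x_*))$, the whole problem reduces to proving, for an arbitrary point $z$,
\[
\sum_{j=1}^d p_j^{-1}\,\Exp_i\!\left[\frac{1}{n^2q_{ij}^2}\big(\nabla_j f_i(z)-\nabla_j f_i(x_*)\big)^2\right]\ \le\ 2\hL\big(f(z)-f(x_*)\big),
\]
and then applying it at $z=y_{k,t}$ and $z=x_k$ and adding.

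To prove this inequality I would substitute the explicit sampling weights $q_{ij}=\omega_i L_{ij}/v_j$ and $p_j=v_j/\sum_l v_l$, together with the identity $\sum_l v_l=n\hL$ (immediate from \eqref{eq:us886vs5}); the weights cancel and the left-hand side collapses to $\tfrac{\hL}{n}\sum_i\sum_j\tfrac{1}{\omega_i L_{ij}}(\nabla_j f_i(z)-\nabla_j f_i(x_*))^2$. The analytic core is then a coordinate-wise co-coercivity estimate for each $f_i$:
\[
\frac12\sum_{j:\,L_{ij}>0}\frac{(\nabla_j f_i(z)-\nabla_j f_i(x_*))^2}{\omega_i L_{ij}}\ \le\ f_i(z)-f_i(x_*)-\langle\nabla f_i(x_*),\,z-x_*\rangle .
\]
Summing this over $i$, dividing by $n$, and using $\nabla f(x_*)=0$ to cancel the linear term turns the right-hand side into $f(z)-f(x_*)$, which closes the chain. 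The co-coercivity estimate itself follows from the standard fact that a convex function $\phi$ that is $1$-smooth with respect to a norm $\|\cdot\|$ satisfies $\tfrac12\|\nabla\phi(z)-\nabla\phi(w)\|_*^2\le\phi(z)-\phi(w)-\langle\nabla\phi(w),z-w\rangle$ (proved by minimizing the quadratic upper model of $\phi-\langle\nabla\phi(w),\cdot\rangle$), applied with $\phi=f_i$, $w=x_*$, and the norm $\|h\|^2:=\omega_i\sum_j L_{ij}h_j^2$, whose dual norm is $\|g\|_*^2=\sum_{j:\,L_{ij}>0}g_j^2/(\omega_i L_{ij})$.

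The step I expect to be the real obstacle is the one on which everything else hinges: upgrading the purely coordinate-wise assumption \eqref{eq:sjs7shd} to genuine $1$-smoothness of $f_i$ in the weighted norm $\|h\|^2=\omega_i\sum_j L_{ij}h_j^2$, i.e.\ recovering the extra factor $\omega_i$. For this I would use a Jensen argument exploiting the sparsity of $f_i$: it suffices to treat $h$ supported on the set $S_i$ of the $\omega_i$ coordinates on which $f_i$ depends (the general case is immediate since both $f_i$ and $\nabla f_i$ ignore coordinates outside $S_i$), and for such $h$ one has the identity $x+h=\tfrac{1}{\omega_i}\sum_{j\in S_i}(x+\omega_i h_j e_j)$; convexity then gives $f_i(x+h)\le\tfrac{1}{\omega_i}\sum_{j\in S_i} f_i(x+\omega_i h_j e_j)$, and estimating each summand by \eqref{eq:sjs7shd} yields $f_i(x+h)\le f_i(x)+\langle\nabla f_i(x),h\rangle+\tfrac{\omega_i}{2}\sum_{j} L_{ij}h_j^2$, which is precisely the required smoothness. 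Everything that remains — the scalar reduction per coordinate, the elementary $(a+b)^2$ and variance-versus-second-moment inequalities, and the bookkeeping with the sampling probabilities — is routine.
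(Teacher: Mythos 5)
Your proof is correct, and its overall skeleton coincides with the paper's: the reduction $\Exp[\|g_{kt}^{ij}\|^2]=\sum_j p_j^{-1}\Exp_i[(G_{kt}^{ij})^2]$, the insertion of $\pm\tfrac{1}{nq_{ij}}\nabla_j f_i(x_*)$, the $(a+b)^2\le 2a^2+2b^2$ split, the variance-versus-second-moment bound (the paper keeps the resulting negative term $-2(\nabla_j f(x_k)-\nabla_j f(x_*))^2$ and uses strong convexity to record a slightly stronger inequality before discarding it, whereas you discard it immediately---both give the stated lemma), and the cancellation of weights via $p_j^{-1}v_j=n\hL$. Where you genuinely diverge is in the analytic core. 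The paper proves a \emph{per-coordinate} co-coercivity lemma, $(\nabla_j f_i(z)-\nabla_j f_i(x_*))^2\le 2L_{ij}\,(f_i(z)-f_i(x_*)-\langle\nabla f_i(x_*),z-x_*\rangle)$, by a one-dimensional minimization of the model $g_i(x+he_j)\le g_i(x)+\langle\nabla g_i(x),he_j\rangle+\tfrac{L_{ij}}{2}h^2$; summing this $j$-independent right-hand side over the $\omega_i$ coordinates in the support of $f_i$ produces the factor $\omega_i$ for free, so your aggregated inequality $\tfrac12\sum_{j}(\nabla_j f_i(z)-\nabla_j f_i(x_*))^2/(\omega_i L_{ij})\le f_i(z)-f_i(x_*)-\langle\nabla f_i(x_*),z-x_*\rangle$ drops out with no further work. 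You instead first upgrade \eqref{eq:sjs7shd} to genuine $1$-smoothness of $f_i$ in the weighted norm $\|h\|^2=\omega_i\sum_j L_{ij}h_j^2$ via the Jensen/sparsity decomposition $x+h=\tfrac{1}{\omega_i}\sum_{j\in S_i}(x+\omega_i h_j e_j)$ (this is essentially Theorem~1 of \cite{approx}, which the paper cites but does not use in the proof) and then invoke the standard dual-norm co-coercivity. This is valid and arguably more conceptual---it explains why $v_j$ is the natural sampling weight---but the step you flag as ``the real obstacle'' is in fact avoidable: the paper never needs vector-valued smoothness, because the $1/\omega_i$ in the collapsed weights is cancelled simply by counting the $\omega_i$ nonzero terms of a coordinate-independent bound.
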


The proof of this lemma can be found in Section~\ref{subsec:key_lemma}.

Note that as $y_{k,t}\to x_*$ and $\ovar_{\oidx}\to x_*$, the bound \eqref{eq:iuhs98s}  decreases to zero. This  is the main feature of modern fast stochastic gradient methods: the squared norm of the stochastic gradient estimate progressively diminishes to zero, as the method progresses, in expectation. 
 Note that the standard SGD method does not have this property: indeed, there is no reason for $\Exp_i \|\nabla f_i(x)\|^2$ to be small even if $x=x_*$. 
 
We are now ready to state the main result of this paper.

\begin{theorem}[Complexity of S2CD]
\label{thm:S2CD}
If $0< \stepsize  < 1/ (2\hL)$, then for all $k\geq 0$ we have:
\footnote{It is possible to replace modify the argument slightly and replace the term $\hat{L}$ appearing in the {\em numerator} by $\hat{L}-\frac{\mu}{\max_{s} p_s}$. However, as this does not bring any significant improvements, we decided to present the result in this simplified form.} 
\begin{equation} \label{eq:main} \Exp[f(\ovar_{\oidx+1}) - f(\ovar_*)] \leq \left(\frac{(1 - \mu \stepsize )^m}{(1 - (1 - \mu \stepsize )^m) (1 - 2\hL \stepsize ) } + \frac{2\hL \stepsize }{1 - 2\hL \stepsize }\right) \Exp[f(\ovar_{\oidx}) - f(\ovar_*)]. \end{equation}
\end{theorem}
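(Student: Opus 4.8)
The plan is to run the telescoping argument familiar from the analyses of SVRG and S2GD, using Lemma~\ref{lem:main} to control the second moment of the stochastic step and $\mu$-strong convexity in two roles: to manufacture the contraction factor $1-\mu\stepsize$ in a one-step bound, and to trade squared distances to the optimum for function-value gaps. Throughout I would fix the epoch index $\oidx$, set $\Delta\eqdef f(\ovar_\oidx)-f(\ovar_*)$ (deterministic given $\ovar_\oidx$), take all expectations conditional on $\ovar_\oidx$, and abbreviate $r_t\eqdef\Exp\|\ivar_{\oidx,\iidx}-\ovar_*\|^2$ and $\xi_t\eqdef\Exp[f(\ivar_{\oidx,\iidx})-f(\ovar_*)]$. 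Because the inner-loop length $\iidx_\oidx$ is drawn before the inner loop starts, it is independent of the per-step $(\csample,\rsample)$ choices, so I may imagine the S2CD update applied for $m+1$ steps, producing $\ivar_{\oidx,0},\dots,\ivar_{\oidx,m+1}$, and only afterwards set $\ovar_{\oidx+1}=\ivar_{\oidx,\iidx_\oidx}$.

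The first step is a one-step recursion. Expanding $\|\ivar_{\oidx,\iidx+1}-\ovar_*\|^2$ via the update $\ivar_{\oidx,\iidx+1}=\ivar_{\oidx,\iidx}-\stepsize g_{kt}^{ij}$, taking expectation over $(\csample,\rsample)$, and using that $g_{kt}^{ij}$ is an unbiased estimate of $\nabla f(\ivar_{\oidx,\iidx})$ (shown in Section~\ref{sec:algo}), the cross term becomes $-2\stepsize\langle\nabla f(\ivar_{\oidx,\iidx}),\ivar_{\oidx,\iidx}-\ovar_*\rangle$, which by \eqref{SVRGstrcvx} is at most $-2\stepsize(f(\ivar_{\oidx,\iidx})-f(\ovar_*))-\mu\stepsize\|\ivar_{\oidx,\iidx}-\ovar_*\|^2$; bounding the remaining term $\stepsize^2\Exp\|g_{kt}^{ij}\|^2$ by Lemma~\ref{lem:main} and taking one further expectation should yield
\begin{equation}\tag{$\star$}
r_{t+1} \le (1-\mu\stepsize)\,r_t - 2\stepsize(1-2\hL\stepsize)\,\xi_t + 4\hL\stepsize^2\Delta, \qquad t \ge 0.
\end{equation}
Since $1-2\hL\stepsize>0$ by hypothesis, $(\star)$ at $t=0$ (where $\xi_0=\Delta\ge0$) gives $r_1\le(1-\mu\stepsize)r_0+4\hL\stepsize^2\Delta$, and combining this with $r_0=\|\ovar_\oidx-\ovar_*\|^2\le\tfrac2\mu\Delta$ (take $x=\ovar_*$, $y=\ovar_\oidx$ in \eqref{SVRGstrcvx}) and $\stepsize(1-2\hL\stepsize)>0$ gives the auxiliary estimate $r_1\le\tfrac2\mu\Delta$.

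The core of the proof is to combine the copies of $(\star)$ for $t=1,\dots,m$ with the geometric weights $(1-\mu\stepsize)^{m-t}$ so that the distance terms telescope. Rewriting $(\star)$ as $2\stepsize(1-2\hL\stepsize)\xi_t\le(1-\mu\stepsize)r_t-r_{t+1}+4\hL\stepsize^2\Delta$ and forming $\sum_{t=1}^m(1-\mu\stepsize)^{m-t}(\cdot)$, the right-hand side collapses to $(1-\mu\stepsize)^m r_1 - r_{m+1} + 4\hL\stepsize^2\Delta\,\beta \le (1-\mu\stepsize)^m r_1 + 4\hL\stepsize^2\Delta\,\beta$, while the left-hand side equals $2\stepsize(1-2\hL\stepsize)\sum_{t=1}^m(1-\mu\stepsize)^{m-t}\xi_t$. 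The crucial observation — and the reason the stopping probabilities are taken proportional to $(1-\mu\stepsize)^{m-t}$ — is that, by the law of $\iidx_\oidx$ and the independence noted above, $\sum_{t=1}^m(1-\mu\stepsize)^{m-t}\xi_t=\beta\,\Exp[f(\ovar_{\oidx+1})-f(\ovar_*)]$. Substituting $r_1\le\tfrac2\mu\Delta$, dividing by $2\stepsize(1-2\hL\stepsize)\beta$, and using $\mu\stepsize\beta=1-(1-\mu\stepsize)^m$ would then give exactly the inequality in the theorem; a final use of the tower rule removes the conditioning on $\ovar_\oidx$.

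I expect the main obstacle to be lining up the geometric-weight telescoping with the geometric stopping rule: the sum must run over $t=1,\dots,m$ rather than $t=0,\dots,m-1$, which forces one to invoke $(\star)$ at $t=m$ — hence the virtual iterate $\ivar_{\oidx,m+1}$, whose contribution $-r_{m+1}\le0$ is harmlessly discarded — and leaves $r_1$, not $r_0$, as the surviving boundary term, which is why the short preliminary estimate $r_1\le\tfrac2\mu\Delta$ is needed. Everything else is routine: verifying the telescoping identity for the $r_t$ terms and evaluating $\sum_{t=1}^m(1-\mu\stepsize)^{m-t}=\beta=\tfrac{1-(1-\mu\stepsize)^m}{\mu\stepsize}$.
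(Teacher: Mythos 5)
Your proof is correct and follows essentially the same route as the paper: your one-step bound $(\star)$ is precisely the paper's Lemma~\ref{l-gah0} (written for $\Exp\|\cdot\|^2$ rather than $\tfrac12\Exp\|\cdot\|^2$), and the geometrically weighted telescoping followed by the strong-convexity bound on the surviving distance term is exactly the argument of Section~\ref{eq:s98h9s8h}. The only divergence is that you telescope over $t=1,\dots,m$ --- introducing the virtual iterate $\ivar_{\oidx,m+1}$ and the auxiliary estimate $r_1\le\tfrac{2}{\mu}\Delta$ --- so that the weighted sum of function gaps matches the stated law of $\iidx_\oidx$ on $\{1,\dots,m\}$ exactly, whereas the paper sums over $t=0,\dots,m-1$ and identifies that sum with $\beta\,\Exp[f(\ovar_{\oidx+1})-f(\ovar_*)]$ up to an index shift; your handling of this boundary detail is the more careful execution of the same idea.
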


By analyzing the above result (one can follow the steps in \cite[Theorem 6]{s2gd}), we get the following useful corollary:
\begin{corollary}
\label{cor:result}
Fix the number of epochs $\oidx \geq 1$, error tolerance $\epsilon \in(0,1)$ and let $\Delta \eqdef \epsilon^{1/\oidx}$ and $\hat{\kappa}\eqdef \hL/\mu$.
If we run Algorithm~\ref{alg:S2CD} with stepsize $\stepsize$ and $m$ set as
\begin{equation}\label{eq:sjs8s} \stepsize = \frac{\Delta}{(4   + 2\Delta)\hL}, \qquad m \geq \left( \frac{4}{\Delta}  + 2\right) \log \left( \frac{2}{\Delta} + 2 \right) \hat{\kappa}, \end{equation}
then $ \Exp[f(\ovar_{\oidx}) - f(\ovar_{*})] \leq \epsilon(f(\ovar_{0}) - f(\ovar_{*}))$. In particular, for $k=\lceil \log(1/\epsilon)\rceil$ we have $\tfrac{1}{\Delta} \leq \exp(1)$, and we can pick
\begin{equation}\label{eq:jd98ydO}k = \lceil \log(1/\epsilon)\rceil, \qquad \stepsize = \frac{\Delta}{(4+2\Delta) \hL} \approx\frac{1}{(4 \exp(1) +2) \hL} \approx \frac{1}{12.87 \hL},\qquad   m\geq 26\hat{\kappa}.\end{equation}
\end{corollary}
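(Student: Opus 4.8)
The plan is to treat Theorem~\ref{thm:S2CD} as a black box. It asserts that each epoch contracts the expected suboptimality by the \emph{deterministic} factor
\[
\rho \eqdef \frac{(1 - \mu \stepsize )^m}{(1 - (1 - \mu \stepsize )^m) (1 - 2\hL \stepsize ) } + \frac{2\hL \stepsize }{1 - 2\hL \stepsize },
\]
so that $\Exp[f(\ovar_{\oidx+1}) - f(\ovar_*)] \leq \rho\, \Exp[f(\ovar_{\oidx}) - f(\ovar_*)]$ for every epoch. Since $\rho$ depends only on the fixed quantities $\mu,\stepsize,m,\hL$, iterating this bound over the $\oidx$ epochs gives $\Exp[f(\ovar_{\oidx}) - f(\ovar_*)] \leq \rho^{\oidx}(f(\ovar_0) - f(\ovar_*))$. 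Because $\Delta^{\oidx} = (\epsilon^{1/\oidx})^{\oidx} = \epsilon$, the whole corollary reduces to the single scalar inequality $\rho \leq \Delta$ under the parameter choices \eqref{eq:sjs8s}; along the way I would also confirm these choices are admissible, i.e.\ that $0 < \stepsize < 1/(2\hL)$ as the theorem requires.

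The key point is that the stepsize in \eqref{eq:sjs8s} is engineered so that the two summands of $\rho$ split evenly into $\Delta/2 + \Delta/2$. First I would substitute $\stepsize = \Delta/((4+2\Delta)\hL)$ to get $2\hL\stepsize = \Delta/(2+\Delta)$, which lies in $(0,1)$ for $\Delta\in(0,1)$ (confirming admissibility) and yields $1 - 2\hL\stepsize = 2/(2+\Delta)$. The second summand then equals exactly $\frac{2\hL\stepsize}{1-2\hL\stepsize} = \Delta/2$. Writing $\alpha \eqdef (1-\mu\stepsize)^m$ and substituting $1 - 2\hL\stepsize = 2/(2+\Delta)$ into the first summand, the requirement $\frac{(2+\Delta)\alpha}{2(1-\alpha)} \leq \Delta/2$ rearranges (using $1-\alpha>0$) to the clean condition $\alpha \leq \Delta/(2(1+\Delta))$.

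To convert this into the stated lower bound on $m$, I would take logarithms: since $\log(1-\mu\stepsize)<0$, the condition $\alpha\leq \Delta/(2(1+\Delta))$ is equivalent to $m\,(-\log(1-\mu\stepsize)) \geq \log\!\big(2(1+\Delta)/\Delta\big) = \log(2/\Delta + 2)$. Applying the elementary bound $-\log(1-s)\geq s$ for $s\in[0,1)$, it suffices to impose $m\,\mu\stepsize \geq \log(2/\Delta+2)$. Finally, using $\hat{\kappa}=\hL/\mu$ gives $\mu\stepsize = \Delta/((4+2\Delta)\hat{\kappa})$, hence $1/(\mu\stepsize) = (4/\Delta+2)\hat{\kappa}$, so the sufficient condition becomes exactly $m \geq (4/\Delta + 2)\log(2/\Delta+2)\,\hat{\kappa}$, matching \eqref{eq:sjs8s}. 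Combining the two summands then gives $\rho \leq \Delta/2 + \Delta/2 = \Delta$, which establishes the main claim.

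For the ``in particular'' specialization I would set $\oidx = \lceil\log(1/\epsilon)\rceil$, so that $\log(1/\Delta) = \tfrac{1}{\oidx}\log(1/\epsilon) \leq 1$ and therefore $1/\Delta \leq \exp(1)$; substituting this bound into $\stepsize = 1/((4/\Delta+2)\hL)$ and into $(4/\Delta+2)\log(2/\Delta+2)\hat{\kappa}$ produces $\stepsize \approx 1/(12.87\,\hL)$ and, after rounding up, $m \geq 26\hat{\kappa}$. The derivation is essentially careful bookkeeping; the only place demanding genuine care is the logarithmic step, where the inequality directions must be tracked correctly because $1-\alpha$ is positive while $\log(1-\mu\stepsize)$ is negative. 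I expect this monotonicity bookkeeping, rather than any analytic difficulty, to be the main thing to get right.
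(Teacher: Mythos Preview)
Your proposal is correct and is exactly the intended derivation: the paper itself does not spell out a proof of this corollary but simply points to \cite[Theorem~6]{s2gd}, and the computation there is precisely the one you outline---choose $\stepsize$ so that the second summand of $\rho$ equals $\Delta/2$, then pick $m$ large enough (via $-\log(1-s)\geq s$) to force the first summand below $\Delta/2$. Your bookkeeping, including the admissibility check $0<\stepsize<1/(2\hL)$ and the numerical specialization for $k=\lceil\log(1/\epsilon)\rceil$, is accurate.
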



If we run S2CD with the parameters set as in \eqref{eq:jd98ydO}, then in each epoch  the gradient of $f$ is evaluated once (this is equivalent to $n$ evaluations of $\nabla f_i$), and the partial derivative of some function $f_i$ is evaluated  $2m\approx 52\hat{\kappa}=O(\hat{\kappa})$ times. If we let $C_{grad}$ be the average cost of evaluating the gradient $\nabla f_i$ and $C_{pd}$ be the average cost of evaluating the partial derivative $\nabla_j f_i$, then the total work of S2CD can be written as
\begin{equation}
\label{eq:complexityc}
 (n {\cal C}_{grad} + m {\cal C}_{pd}) k  \overset{\eqref{eq:jd98ydO}}{=}  \mathcal{O} \left( (n  \mathcal{C}_{grad} +  \hat{\kappa} \mathcal{C}_{pd}) \log \left(\frac{1}{\epsilon}\right) \right),
\end{equation}

The complexity results of methods such as S2GD/SVRG \cite{s2gd, svrg, proxsvrg} and SAG/SAGA \cite{sag, saga}---in a similar but not identical setup to ours (these papers assume $f_i$ to be $L_i$-smooth)---can be written in a similar form: 
\begin{equation}
\label{eq:complexity} 
\mathcal{O}\left( (n  \mathcal{C}_{grad} +  \kappa \mathcal{C}_{grad}) \log \left( \frac{1}{\epsilon}\right)\right),
\end{equation}
where $\kappa = L/\mu$ and either $L=L_{max}\eqdef \max_i L_i$ (\cite{sag, svrg,s2gd, saga}), or $L=L_{avg}\eqdef \tfrac{1}{n}\sum_i L_i$  (\cite{proxsvrg}). The difference between our result \eqref{eq:complexityc} and existing results \eqref{eq:complexity} is in the term  $\hat{\kappa}{\cal C}_{pd}$ -- previous results have $\kappa {\cal C}_{grad}$ in that place. This difference constitutes a trade-off:  while $\hat{\kappa}\geq \kappa$ (we comment on this below), we clearly have ${\cal C}_{pd}\leq {\cal C}_{grad}$. The comparison of the quantities $\kappa {\cal C}_{grad}$ and $\hat{\kappa}{\cal C}_{pd}$  is not straightforward and is problem dependent. 

Let us now comment how do the condition numbers  $\hat{\kappa}$ and $\kappa_{avg}=L_{avg}/\mu$ compare. It can be show that (see \cite{rtsimple}) \[L_i \leq \sum_{j=1}^d L_{ij}\] and, moreover, this inequality can be tight. Since $\omega_i\geq 1$ for all $i$, we have
\[\hat{\kappa} = \frac{\hL}{\mu} \overset{\eqref{eq:us886vs5}}{=} \frac{1}{\mu n}  \sum_{j = 1}^d \sum_{i = 1}^n \omega_i L_{ij} 
\geq \frac{1}{\mu n} \sum_{i=1}^n   \sum_{j=1}^d L_{ij}  \geq \frac{1}{\mu n} \sum_{i=1}^n L_i  = \frac{L_{avg}}{\mu} = \kappa_{avg}.\]

Finally, $\hat{\kappa}$  can be smaller or larger than $\kappa_{max}\eqdef L_{max}/\mu$.

\section{Proof of Lemma~\ref{lem:main} } \label{subsec:key_lemma}

We will prove the following stronger inequality:
\begin{align}\label{a-EGij2bounds2gd}
\Exp \left[\left\|g_{kt}^{ij}\right\|^2\right] 
 \leq 4 \hL \left( f(\ivar_{\oidx, \iidx}) - f(\ovar_*)\right) + 4 \left( \hL - \frac{\mu}{\max_s p_s} \right) \left( f(\ovar_{\oidx}) - f(\ovar_*) \right).
\end{align}
Lemma~\ref{lem:main} follows by dropping the negative term.

\paragraph{STEP 1.} We first break down the left hand side of \eqref{a-EGij2bounds2gd} into $d$ terms each of which we will bound separately. By first taking expectation conditioned on $j$ and then taking the full expectation, we can write:
\begin{eqnarray}
\Exp \left[\left\|g_{kt}^{ij}\right\|^2\right] &\overset{\eqref{eq:g_kl}}{=} & \Exp \left[\Exp_i \left[ \|p_j^{-1} G^{ij}_{kt} e_j\|^2 \right] \right] \notag \\
&  = & \Exp  \left[ p_j^{-1}  \Exp_i  \left[ \left(G^{ij}_{kt}\right)^2\right]  \right] \;\; =\;\; \sum_{s=1}^ d p_s^{-1} \Exp_i \left[ \left( G^{\rsample s}_{\oidx \iidx} \right)^2  \right]. \label{eq:98gsjs9t87}
\end{eqnarray}

\paragraph{STEP 2.} We now further break each of these $d$ terms into three pieces. That is, for each $j=1,\dots,d$ we have:

\begin{eqnarray}
\Exp_i \left[ \left( G^{\rsample, \csample}_{\oidx, \iidx} \right)^2 \right]
&
\overset{\eqref{eq:0j9j0s9s}}{=} & \Exp_i \left[ \left( \nabla_{\csample} f(\ovar_{\oidx}) + \frac{ \nabla_{\csample} f_{\rsample}(\ivar_{\oidx, \iidx}) - \nabla_{\csample} f_{\rsample}(\ovar_{\oidx})}{n \weightinG_{\rsample, \csample}}  + \frac{ \nabla_{\csample} f_{\rsample}(x_*) - \nabla_{\csample} f_{\rsample}(x_*) }{n \weightinG_{\rsample, \csample}} \right)^2 \right] \notag \\
& = &  \Exp_i \left[ \left( \frac{\nabla_{\csample} f_{\rsample}(\ivar_{\oidx, \iidx}) - \nabla_{\csample} f_{\rsample}(x_*)}{n \weightinG_{\rsample, \csample}}  + \nabla_{\csample} f(\ovar_{\oidx}) - \frac{\nabla_{\csample} f_{\rsample}(\ovar_{\oidx}) - \nabla_{\csample} f_{\rsample}(x_*)}{n \weightinG_{\rsample, \csample}} \right)^2\right] \notag \\
& \leq & 2  \Exp_i \left[ \left( \frac{ \nabla_{\csample} f_{\rsample}(\ivar_{\oidx, \iidx}) - \nabla_{\csample} f_{\rsample}(x_*)}{n \weightinG_{\rsample, \csample}} \right)^2 \right]  + 2  \Exp_i \left[ \left( \nabla_{\csample} f(\ovar_{\oidx}) - \frac{\nabla_{\csample} f_{\rsample}(\ovar_{\oidx}) - \nabla_{\csample} f_{\rsample}(x_*) }{n \weightinG_{\rsample, \csample}} \right)^2 \right] \notag \\
& = & 2 \Exp_i \left[ \left( \frac{\nabla_{\csample} f_{\rsample}(\ivar_{\oidx, \iidx}) - \nabla_{\csample} f_{\rsample}(x_*)}{n \weightinG_{\rsample, \csample}} \right)^2 \right] + 2\Exp_i \left[ \left( \frac{\nabla_{\csample} f_{\rsample}(\ovar_{\oidx}) - \nabla_{\csample} f_{\rsample}(x_*)}{n \weightinG_{\rsample, \csample}}  - (\nabla_{\csample} f(\ovar_{\oidx}) - \nabla_{\csample} f(x_*)) \right)^2  \right] \notag \\
& = & 2  \Exp_i \left[ \left( \frac{\nabla_{\csample} f_{\rsample}(\ivar_{\oidx, \iidx}) - \nabla_{\csample} f_{\rsample}(x_*) }{n \weightinG_{\rsample, \csample}} \right)^2 \right]  + 2  \Exp _i\left[ \left(\frac{\nabla_{\csample} f_{\rsample}(\ovar_{\oidx}) - \nabla_{\csample} f_{\rsample}(x_*) }{n \weightinG_{\rsample, \csample}} \right)^2  \right]\notag \\
&& \qquad - 2  (\nabla_j f(\ovar_{\oidx}) - \nabla_{j} f(x_*))^2. \label{eq-dzfdfd}
\end{eqnarray}

\paragraph{STEP 3.} In this step we bound the first two terms in the right hand side of inequality \eqref{eq-dzfdfd}. It will now be useful to introduce the following notation:
\begin{equation}
\label{eq:Qj}
Q_\csample \eqdef \{ i : L_{\rsample \csample} \neq 0 \}, \qquad j=1,\dots,d,
\end{equation}
and 
\[
1_{\rsample \csample}  \eqdef \begin{cases}  1 & \mathrm{~if~} L_{ij}\neq 0 \\
 0 & \mathrm{~otherwise}
 \end{cases} , \qquad \rsample =1,\dots, n, \quad j=1,\dots,d.
\]

Let us fist examine the first term in the right-hand side of~\eqref{eq-dzfdfd}. Using the coordinate co-coercivity lemma (Lemma~\ref{lemma:coerc}) with $y=x_*$, we obtain the inequality
\begin{align}\label{a-dfzeff}
\left( \nabla_{\csample} f_{\rsample}(\ovar) - \nabla_{\csample} f_{\rsample}(\ovar_*) \right)^2 \leq 2L_{ij} \left( f_{\rsample}(\ovar) 
- f_{\rsample}(\ovar_*) - \left\< \nabla f_i(\ovar_*),\ovar - \ovar_* \right> \right),
\end{align}
using which we get the bound:
\begin{eqnarray}
& & 2 \sum_{s = 1}^d p_s^{-1} \Exp_i \left[ \left( \frac{1}{n \weightinG_{\rsample, \csample}} \left( \nabla_{\csample} f_{\rsample}(\ivar_{\oidx, \iidx}) - \nabla_{\csample} f_{\rsample}(x_*) \right)\right)^2 \right] \notag \\
&= & 2 \sum_{s = 1}^d p_s^{-1} \sum_{\rsample \in Q_s} \frac{1}{n^2 \weightinG_{\rsample, s}} ( \nabla_{s} f_{\rsample}(\ivar_{\oidx, \iidx}) - \nabla_{s} f_{\rsample}(x_*) )^2 \notag \\
& \overset{\eqref{a-dfzeff}}{\leq} & 4 \sum_{s = 1}^d p_s^{-1}\sum_{\rsample \in Q_s} \frac{L_{\rsample s}}{n^2 \weightinG_{\rsample, s}} \left( f_{\rsample}(\ivar_{\oidx, \iidx}) 
- f_{\rsample}(\ovar_*) - \left\< \nabla f_i(\ovar_*), \ivar_{\oidx, \iidx} - \ovar_* \right> \right) \notag \\
& \overset{\eqref{eq:Qj}}{=} & 4 \sum_{\rsample = 1}^n \sum_{s = 1}^ d p_s^{-1} 1_{\rsample s} \frac{ \weightofnorm_s}{n^2 \omega_\rsample}\left( f_{\rsample}(\ivar_{\oidx, \iidx}) - f_{\rsample}(\ovar_*) - \left\< \nabla f_i(\ovar_*), \ivar_{\oidx, \iidx} - \ovar_* \right> \right).\label{eq:iud7dke}
\end{eqnarray}

Note that by~\eqref{eq:sjs7tbjd} and \eqref{eq:us886vs5}, we have that for all $s =1,2,\dots,d$,
$$ p_s^ {-1}\weightofnorm_{s}=n\bar L.$$
Continuing from \eqref{eq:iud7dke}, we can therefore further write
\begin{eqnarray}
& & 2 \sum_{s = 1}^d p_s^{-1} \Exp_i \left[ \left( \frac{1}{n \weightinG_{\rsample, \csample}} \left( \nabla_{\csample} f_{\rsample}(\ivar_{\oidx, \iidx}) - \nabla_{\csample} f_{\rsample}(x_*) \right) \right)^2  \right] \notag \\
&\leq & 4 \sum_{\rsample = 1}^n \sum_{s = 1}^ d 1_{\rsample s} \frac{\hL}{n \omega_\rsample} \left( f_{\rsample}(\ivar_{\oidx, \iidx}) - f_{\rsample}(\ovar_*) - \left\< \nabla f_i(\ovar_*),\ivar_{\oidx, \iidx} - \ovar_* \right> \right) \notag \\
&= & \frac{4 \hL}{n}\sum_{i = 1}^n \left( f_{\rsample}(\ivar_{\oidx, \iidx}) - f_{\rsample}(\ovar_*) - \left\< \nabla f_i(\ovar_*),\ivar_{\oidx, \iidx}- \ovar_* \right> \right) \notag \\
&= & 4 \hL ( f(\ivar_{\oidx, \iidx}) - f(\ovar_*) ).\label{eq-dfdfs1}
\end{eqnarray}

The same reasoning applies to the second term on the right-hand side of the inequality~\eqref{eq-dzfdfd} and we have:
\begin{align}\label{a-derq2}
& 2 \sum_{s = 1}^d p_s^{-1} \Exp_i \left[ \left( \frac{1}{n \weightinG_{\rsample, \csample}} \left( \nabla_{\csample} f_{\rsample}(\ovar_{\oidx}) - \nabla_{\csample} f_{\rsample}(x_*) \right) \right)^2 \right] \leq 4 \hL (f(\ovar_{\oidx}) - f(\ovar_*)).
\end{align}

\paragraph{STEP 4.}

Next we bound the third term on the right-hand side of  the inequality~\eqref{eq-dzfdfd}. First note that since $f$ is $\mu$-strongly convex (see \eqref{SVRGstrcvx}), for all $x \in \R^ d$ we have:
\begin{align}\label{a-xxsna}
\left< \nabla f(x), x - \ovar_* \right> \geq  f(x) - f(\ovar_*) + \frac{\mu}{2}\| x - \ovar_* \|^2.
\end{align}
We can now write:
\begin{eqnarray}
 2 \sum_{s = 1}^d p_s^{-1} (\nabla_{s} f(\ovar_{\oidx})-\nabla_{s} f(x_*))^2  
&\geq & \frac{2}{\max_s p_s} \sum_{j = 1}^d (\nabla_{\csample} f(\ovar_{\oidx}) - \nabla_{\csample} f(x_*))^2\notag  \\
&\overset{\eqref{a-xxsna}}{ \geq} & \frac{4 \mu }{\max_s p_s}(f(\ovar_{\oidx}) - f(x_*)).\label{eq-eeff}
\end{eqnarray}

\paragraph{STEP 5.} We conclude by combining~\eqref{eq:98gsjs9t87}, \eqref{eq-dzfdfd},~\eqref{eq-dfdfs1},~\eqref{a-derq2} and~\eqref{eq-eeff}.

\section{Proof of the Main Result}\label{sec:proofs}

In this section we provide the proof of our main result.  In order to present the proof in an organize fashion,  we first establish two technical lemmas.

\subsection{Coordinate co-coercivity}

It is a well known and widely used fact (see, e.g. \cite{nesterovIntro}) that for a continuously differentiable function  $\phi:\R^d\to \R$ and constant $L_\phi>0$, the following two conditions are equivalent:
\[\phi(x) \leq \phi(y) + \< \nabla \phi(y),x-y> + \frac{L_\phi}{2}\|x-y\|^2, \quad \forall x,y\in \R^d\]
and
\[\|\nabla \phi(x) - \nabla \phi(y)\|^2 \leq 2L_{\phi} (\phi(x)-\phi(y)-\<\nabla \phi(y),x-y>), \qquad \forall x,y\in \R^d.\]
The second condition is often referred to by the name co-coercivity.  Note that our assumption \eqref{eq:sjs7shd} on $f_i$ is similar to the first inequality. In our first lemma we establish a coordinate-based co-coercivity result which applies to functions $f_i$ satisfying  \eqref{eq:sjs7shd}. 

 \begin{lemma}[Coordinate co-coercivity] \label{lemma:coerc}For all $x,y\in \R^d$ and $i=1,\dots, n$, $j=1,\dots,d$, we have:
\begin{align}\label{a-dfzeff1}
\left(\nabla_{\csample} f_{\rsample}(\ovar) - \nabla_{\csample} f_{\rsample}(y) \right)^2 \leq 2L_{ij} \left( f_{\rsample}(\ovar) 
- f_{\rsample}(y) - \left\< \nabla f_i(y),\ovar - y \right> \right).
\end{align}
\end{lemma}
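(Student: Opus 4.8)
The plan is to derive the coordinate co-coercivity inequality \eqref{a-dfzeff1} for each fixed pair $(i,j)$ by applying the standard (scalar) co-coercivity argument to an auxiliary one-dimensional function built from $f_i$ along the $j^{th}$ coordinate direction. Concretely, fix $i$, $j$, and $y\in\R^d$, and consider the restriction obtained by moving from $y$ along $e_j$: the assumption \eqref{eq:sjs7shd} tells us precisely that the slice $s\mapsto f_i(y+s e_j)$ has an $L_{ij}$-Lipschitz derivative (its derivative is $\nabla_j f_i(y+se_j)$), so it satisfies the one-dimensional version of the quadratic upper bound. The key observation is that \eqref{eq:sjs7shd} is itself \emph{not} the full ``$L_{ij}$-smooth along direction $e_j$'' statement at an arbitrary base point, only at a point moved by a multiple of $e_j$ from where the gradient is evaluated; but that is exactly what one needs.

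The cleanest route avoids slicing and instead mimics the textbook proof of co-coercivity directly. Fix $x,y\in\R^d$ and set $\delta \eqdef \nabla_j f_i(x) - \nabla_j f_i(y)$. Define the shifted point $z \eqdef x - \tfrac{1}{L_{ij}}\delta\, e_j$, which differs from $x$ only in coordinate $j$. Apply \eqref{eq:sjs7shd} with base point $x$ and step $h = -\delta/L_{ij}$ to get
\begin{equation*}
f_i(z) \leq f_i(x) - \frac{1}{L_{ij}}\langle \nabla f_i(x), \delta e_j\rangle + \frac{1}{2L_{ij}}\delta^2 = f_i(x) - \frac{1}{2L_{ij}}\delta^2 - \frac{1}{L_{ij}}\nabla_j f_i(y)\,\delta .
\end{equation*}
(If $L_{ij}=0$ then \eqref{eq:sjs7shd} forces $\nabla_j f_i$ to be constant in the $j$-direction, hence $\delta=0$ when $x,y$ differ only in $j$; more care is needed for general $x,y$ — see below — but the inequality \eqref{a-dfzeff1} is trivially $0\le 0$ whenever $\nabla_j f_i(x)=\nabla_j f_i(y)$, and one argues $\delta = 0$ here from convexity plus $L_{ij}=0$.) Now invoke convexity of $f_i$, namely $f_i(y)\ge f_i(z)+\langle\nabla f_i(y),\, z - y\rangle$. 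Since $z-y = (x-y) - \tfrac{1}{L_{ij}}\delta e_j$, this gives $f_i(y) \ge f_i(z) + \langle \nabla f_i(y), x-y\rangle - \tfrac{1}{L_{ij}}\nabla_j f_i(y)\,\delta$. Substituting the upper bound for $f_i(z)$ and rearranging, the cross terms $-\tfrac{1}{L_{ij}}\nabla_j f_i(y)\,\delta$ cancel, leaving $\tfrac{1}{2L_{ij}}\delta^2 \le f_i(x) - f_i(y) - \langle\nabla f_i(y), x-y\rangle$, which is exactly \eqref{a-dfzeff1} after multiplying by $2L_{ij}$.

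I expect the main obstacle to be a conceptual subtlety rather than a computational one: assumption \eqref{eq:sjs7shd} only provides the quadratic bound when the increment is along a \emph{single} coordinate $e_j$, so one must be careful that every application of it in the argument uses a pure-$e_j$ increment (the choice $h = -\delta/L_{ij}$ above does). The secondary issue is the degenerate case $L_{ij}=0$; here I would note that \eqref{eq:sjs7shd} with $L_{ij}=0$ and convexity of $f_i$ together force $s\mapsto f_i(x+se_j)$ to be affine, hence $\nabla_j f_i$ is independent of the $j^{th}$ coordinate, and then combine with convexity in the full space to conclude $\nabla_j f_i(x)=\nabla_j f_i(y)$ would \emph{not} follow in general for arbitrary $x,y$ — so instead I would handle $L_{ij}=0$ by interpreting \eqref{a-dfzeff1} as the statement $0 \le 2\cdot 0\cdot(\cdots)$ is false in general, meaning one should really restrict attention to the case actually used in the paper (Lemma~\ref{lem:main} only invokes it through indices with $L_{ij}\neq 0$, i.e. $i\in Q_j$). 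Thus in the write-up I would either add the harmless hypothesis $L_{ij}>0$ or remark that the inequality is applied only in that regime. Aside from this caveat, the proof is a two-line specialization of the classical co-coercivity lemma and needs no further machinery.
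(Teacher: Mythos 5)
Your argument is essentially the paper's proof unwound: the paper defines $g_i(x)=f_i(x)-f_i(y)-\<\nabla f_i(y),x-y>$, applies \eqref{eq:sjs7shd} to $g_i$ and minimizes over $h$ (which selects exactly your step $h=-\delta/L_{ij}$), and uses $g_i\ge 0=g_i(y)$ in place of your explicit convexity step; the two are the same computation. One transcription slip to fix: the convexity inequality you invoke should read $f_i(z)\ge f_i(y)+\left\<\nabla f_i(y),z-y\right>$ (gradient at $y$, lower-bounding $f_i(z)$), not $f_i(y)\ge f_i(z)+\left\<\nabla f_i(y),z-y\right>$ as written --- only the correct orientation combines with your upper bound on $f_i(z)$ to produce the cancellation and the final inequality you state, so the intended argument is clearly the right one. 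Your caveat about $L_{ij}=0$ is legitimate (the division by $L_{ij}$ requires $L_{ij}>0$); the paper's proof silently makes the same assumption, and since the lemma is only ever applied with $i\in Q_j$, adding the hypothesis $L_{ij}>0$ is harmless. (In fact one can show that $L_{ij}=0$ together with convexity forces $\nabla_j f_i$ to be constant, so both sides of \eqref{a-dfzeff1} vanish, but this is not needed.)
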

\begin{proof}
Fix any $i,j$ and $y\in \R^d$.
Consider the function $g_i:\R^ d\rightarrow \R$ defined by:
\begin{equation}\label{eq:9sh98hs}
g_i(x)\eqdef f_{\rsample}(\ovar) 
- f_{\rsample}(y) - \left\< \nabla f_i(y),\ovar - y \right>.
\end{equation}
Then since $f_i$ is convex, we know that $g_i(x)\leq 0$ for all $x$, with $g_i(y)=0$. Hence, $y$ minimizes $g_i$. We also know that for any $x\in \R^d$:
\begin{equation}\label{eq:09s98hs}
\nabla_j g_i(x)=\nabla_j f_i(x)-\nabla_j f_i(y).
\end{equation}
Since $f_i$ satisfies \eqref{eq:sjs7shd}, so does $g_i$, and hence for all $x\in \R^d$ and $h\in \R$, we have 
\[g_i(x+he_j) \leq g_i(x) + \<\nabla g_i(x) , he_j  > + \frac{L_{ij}}{2}h^2.\]
Minimizing both sides in $h$, we obtain 
$$
g_i(x)-g_i(y)\geq \frac{1}{2L_{ij}} (\nabla_j g_i(x))^2,
$$ which together with \eqref{eq:9sh98hs} yields the result.
\end{proof}

\subsection{Recursion}

We now proceed to the  final lemma, establishing a key recursion which ultimately yields the proof of the main theorem, which we present in Section~\ref{eq:s98h9s8h}.

\begin{lemma}[Recursion]\label{l-gah0} The iterates of S2CD satisfy the following recursion:
\begin{equation}
\begin{split}
\frac{1}{2} \Exp \left[ \| \ivar_{\oidx, \iidx + 1} - \ovar_* \|^2 \right] &+ h(1-2h \hat L)(f(\ivar_{\oidx, \iidx}) - f(\ovar_*)) \\
&\leq (1-h\mu)\frac{1}{2}\|\ivar_{\oidx, \iidx} - \ovar_*\|^2+2h^2  \hat L (f(\ovar_{\oidx}) - f(\ovar_*)).
\end{split}
\end{equation}

\end{lemma}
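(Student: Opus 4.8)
The plan is to prove the recursion by analyzing how $\|y_{k,t+1} - x_*\|^2$ changes in a single inner step, taking expectation over the random pair $(i,j)$, and then invoking Lemma~\ref{lem:main} to control the variance term. First I would write, using the update rule \eqref{eq:87gsb8s9},
\begin{equation*}
\|y_{k,t+1} - x_*\|^2 = \|y_{k,t} - x_*\|^2 - 2h \langle g_{kt}^{ij}, y_{k,t} - x_* \rangle + h^2 \|g_{kt}^{ij}\|^2 .
\end{equation*}
Taking expectation with respect to $(i,j)$ conditioned on the history up to $y_{k,t}$, the middle term becomes $-2h\langle \nabla f(y_{k,t}), y_{k,t} - x_* \rangle$ because $g_{kt}^{ij}$ is an unbiased estimate of $\nabla f(y_{k,t})$ (established right before the statement of Lemma~\ref{lem:main}), and the last term is bounded by Lemma~\ref{lem:main}:
\begin{equation*}
\Exp\left[\|g_{kt}^{ij}\|^2\right] \leq 4\hat L\left(f(y_{k,t}) - f(x_*)\right) + 4\hat L\left(f(x_k) - f(x_*)\right).
\end{equation*}

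Next I would bound the inner product term using $\mu$-strong convexity of $f$. The inequality \eqref{a-xxsna} (already derived in the proof of Lemma~\ref{lem:main}) applied with $x = y_{k,t}$ gives
\begin{equation*}
-\langle \nabla f(y_{k,t}), y_{k,t} - x_* \rangle \leq -\left(f(y_{k,t}) - f(x_*)\right) - \frac{\mu}{2}\|y_{k,t} - x_*\|^2 .
\end{equation*}
Substituting both bounds into the expanded expression yields
\begin{equation*}
\Exp\left[\|y_{k,t+1} - x_*\|^2\right] \leq (1 - h\mu)\|y_{k,t} - x_*\|^2 - 2h(1 - 2h\hat L)\left(f(y_{k,t}) - f(x_*)\right) + 4h^2\hat L\left(f(x_k) - f(x_*)\right).
\end{equation*}
Dividing by $2$ and rearranging the $f(y_{k,t}) - f(x_*)$ term to the left-hand side gives exactly the claimed recursion.

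The steps here are essentially bookkeeping: the real work (the variance bound of Lemma~\ref{lem:main} and the strong convexity inequality) has already been done, so I do not anticipate a serious obstacle. The one point that needs care is the sign and coefficient tracking when moving the $-2h(1-2h\hat L)(f(y_{k,t})-f(x_*))$ term across and halving everything, and implicitly the requirement that the stepsize be small enough ($h < 1/(2\hat L)$, as assumed in Theorem~\ref{thm:S2CD}) for the coefficient $1 - 2h\hat L$ to be positive so that the recursion is useful downstream — though the recursion as stated holds regardless of sign. A secondary subtlety is making sure the conditioning convention is applied consistently: the expectation in the lemma is conditional on the history up to $y_{k,t}$, with $x_k$ and $y_{k,t}$ treated as fixed, so $f(x_k) - f(x_*)$ passes through the expectation unchanged.
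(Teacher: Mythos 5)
Your proposal is correct and follows essentially the same route as the paper's proof: expand the squared distance, use unbiasedness of $g_{kt}^{ij}$ for the cross term, apply the strong-convexity inequality \eqref{a-xxsna} to the inner product, invoke Lemma~\ref{lem:main} for the variance term, and rearrange. The coefficient bookkeeping you flag works out exactly as you describe, and your remark that the recursion holds regardless of the sign of $1-2h\hat L$ is also consistent with the paper, which only imposes $h<1/(2\hat L)$ later in the proof of Theorem~\ref{thm:S2CD}.
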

\begin{proof}
 \begin{eqnarray*}
\frac{1}{2} \Exp \left[ \| \ivar_{\oidx, \iidx+1} - \ovar_* \|^2 \right]  &\overset{\eqref{eq:87gsb8s9}}{=} &\frac{1}{2} \Exp \left[ \left\| \ivar_{\oidx, \iidx} - h p_{\csample}^{-1} G^{\rsample \csample}_{\oidx \iidx} e_{\csample} - \ovar_* \right\|^2 \right] \\
&= &\frac{1}{2} \| \ivar_{\oidx, \iidx} - \ovar_* \|^2 - \Exp \left[ \left< h p_{\csample}^{-1} G^{\rsample \csample}_{\oidx \iidx} e_{\csample}, \ivar_{\oidx, \iidx} - \ovar_* \right> \right] + \frac{1}{2} \Exp \left[ \left\| h p_{\csample}^{-1} G^{\rsample \csample}_{\oidx \iidx} e_{\csample} \right\|^2 \right] \\
&\overset{\eqref{EGij}}{=} &\frac{1}{2} \| \ivar_{\oidx, \iidx} - \ovar_* \|^2 - h \left< \nabla f(\ivar_{\oidx, \iidx}), \ivar_{\oidx, \iidx} - \ovar_* \right> + \frac{h^2}{2} \Exp \left[ \left\| g_{kt}^{ij} \right\|^2 \right] \\
&\overset{\eqref{a-xxsna}}{\leq} &\frac{1}{2} \|\ivar_{\oidx, \iidx} - \ovar_* \|^2 - h \left( f(\ivar_{\oidx, \iidx}) - f(\ovar_*) + \frac{\mu}{2} \left\| \ivar_{\oidx, \iidx} - \ovar_* \right\|^2 \right) + \frac{h^2}{2} \Exp \left[ \left\| g_{kt}^{ij} \right\|^2 \right] \\
&\overset{\eqref{eq:iuhs98s}}{\leq} &\frac{1}{2} \|\ivar_{\oidx, \iidx} - \ovar_*\|^2 - h \left( f(\ivar_{\oidx, \iidx}) - f(\ovar_*) + \frac{\mu}{2} \| \ivar_{\oidx, \iidx} - \ovar_* \|^2 \right) \\
&& \qquad + 2 h^2 \hL \left( f(\ivar_{\oidx, \iidx}) - f(\ovar_*)\right) + 2 h^2  \hL \left(f(\ovar_{\oidx}) - f(\ovar_*) \right)
\\
&= & (1 - \mu h) \frac{1}{2} \|\ivar_{\oidx, \iidx} - \ovar_*\|^2 - h (1 - 2h \hL)(f(\ivar_{\oidx, \iidx}) - f(\ovar_*)) \\
&& \qquad + 2 h^2 \hL (f(\ovar_{\oidx}) - f(\ovar_*)).
\end{eqnarray*}
\end{proof}

\subsection{Proof of Theorem~\ref{thm:S2CD}}\label{eq:s98h9s8h}

For simplicity, let us denote:
$$
\eta_{\oidx, \iidx} \eqdef \frac{1}{2} \Exp \left[ \| \ivar_{\oidx, \iidx} - \ovar_* \|^2\right], \qquad  \xi_{\oidx, \iidx} \eqdef \Exp\left[ f(\ivar_{\oidx, \iidx}) - f(\ovar_*)\right],
$$
where the expectation now is with respect to the entire history. Then by Lemma~\ref{l-gah0} we have the following $m$ inequalities:
\begin{align*}
\eta_{\oidx, m} + h(1 - 2 h \hL) \xi_{\oidx, m-1} &\leq (1 - \mu h) \eta_{\oidx, m-1} + 2 h^2 \hL \xi_{\oidx, 0}, \\
(1 - \mu h) \eta_{\oidx, m-1} + h(1 - 2 h \hL)(1 - \mu h) \xi_{\oidx, m-2} &\leq (1 - \mu h)^2 \eta_{\oidx, m-2} + 2 h^2  \hL (1 - \mu h)\xi_{\oidx, 0}, \\
&\,\,\,\vdots \\
(1 - \mu h)^t \eta_{\oidx, m-t} + h(1 - 2 h \hL)(1 - \mu h)^t\xi_{\oidx, m-t-1} &\leq (1 - \mu h)^{t+1} \eta_{\oidx, m-t-1} + 2 h^2 \hL (1 - \mu h)^t \xi_{\oidx, 0}, \\
&\,\,\,\vdots \\
(1 - \mu h)^{m-1} \eta_{\oidx, 1} + \gamma(1 - 2 h \hL)(1 - \mu h)^{m-1} \xi_{\oidx, 0} &\leq (1 - \mu h)^m \eta_{\oidx, 0} + 2 h^2  \hL(1 - \mu h)^{m-1} \xi_{\oidx, 0}.
\end{align*}
By summing up the above $m$ inequalities, we get:
$$
\eta_{\oidx, m} + \gamma(1 - 2 h \hL) \beta \xi_{\oidx, m-1-t} \leq (1 - \mu h)^m \eta_{\oidx, 0} + 2 h^2 \hL \beta \xi_{\oidx, 0},
$$
where 
$$ \beta = \sum_{t = 0}^{m-1}(1 - \mu h)^t. $$
It follows from the strong convexity assumption~\eqref{SVRGstrcvx} that
$$ f(\ovar_k) - f(x_*) \geq \frac{\mu}{2} \| \ovar_k- x_* \|^2, $$
that is,
$$ \xi_{\oidx,0}\geq \mu \eta_{\oidx,0}. $$
Therefore,
$$ h(1 - 2 h \hL) \xi_{\oidx+1, 0} \leq \left( \frac{(1 - \mu h)^m}{\beta \mu} + 2 h^2 \hL \right) \xi_{\oidx, 0} $$
Hence if $0<2 h \hL < 1$, then we obtain:
$$
\xi_{\oidx+1, 0} \leq \left( \frac{(1 - \mu h)^m}{(1 - (1 - \mu h)^m)(1 - 2 h \hL)} + \frac{2 h \hL }{1 - 2 h \hL} \right) \xi_{\oidx, 0},
$$
which finishes the proof.

\bibliography{notes}

\end{document}